\newtheorem{theorem}{Theorem}
\newtheorem{remark}{Remark}
\newtheorem{proposition} {Proposition} 
\theoremstyle{definition}
\DeclareMathOperator{\tr}{trace}
\begin{document}

\title{Hierarchical Alternating Least Squares Methods for Quaternion Nonnegative Matrix Factorizations}
\date{}
\author{
Junjun Pan 
\thanks{Department of Mathematics,
Hong Kong Baptist University.
Emails: junjpan@hkbu.edu.hk. J.Pan' s research is supported in part by Guang Dong Basic and Applied Basic Research Foundation (Grant No. 2023A1515010973).} 
}  
\maketitle

\begin{abstract}

In this paper,  we discuss a simple model for RGB color and polarization images under a unified framework of quaternion nonnegative matrix factorization (QNMF) and present hierarchical nonnegative least squares method to solve the factor matrices.
 The convergence analysis of the algorithm is discussed as well.  We test the proposed method in the polarization image and color facial image representation. Compared to the state-of-the-art methods, the experimental results demonstrate the effectiveness of the hierarchical nonnegative least squares method for QNMF model. 

\end{abstract}

\noindent
 \textbf{Keywords.}
Quaternions, color images, polarized image, matrix factorization, Hierarchical ALS. 

\section{Introduction} 

Quaternions as an extension of complex numbers, were first introduced in 1843 by mathematician William Rowan Hamilton. They have gradually gained attentions and been widely used in the field of image and signal processing over the past thirty years. Quaternions are flexible to encode the components of 3D and 4D vector signals. For instance, they can be used to represent RGB color images \cite{le2003quaternion, Pei, pei1999color,sangwine1996fourier}, i.e., pixels of the red, green and blue channels are encoded in the three imaginary parts of quaternions. Another interesting application is using quaternions to process polarized signals \cite{flamant2020quaternion,gil2022polarized,le2004singular,pan2022separable}, i.e., the real component encodes the light intensity, while the imaginary part records the polarization state. We refer the reader to the recent survey \cite{miron2023quaternions} and the reference therein for more applications of quaternions.

Matrix factorization is an important linear dimensional reduction technique that can learn the latent structure of data, and has become a popular tool in signal and image processing community. In general, given a data matrix $M\in \mathbb{R}^{m\times n}$, matrix factorization aims to find two factor matrices $W\in \mathbb{R}^{m\times r}$, and $H\in \mathbb{R}^{r\times n}$ such that $M\approx WH$. Usually, the number $r$ is much less than $m$ and $n$. Matrix factorization plays a crucial role in low-rank approximation modeling for many problems, such as blind source signal separations. Similarly, researchers started to introduce matrix factorization into quaternion linear mixing model for quaternion signals.

In the blind polarized
source separation problems, the state of polarization can be entirely characterized by the four components of the
Stokes vector \cite{mcmaster1954polarization, berry1977measurement} and  reformulated in quaternion algebra \cite{kuntman2019quaternion}. Researchers in  \cite{le2004singular,javidi2011fast,via2010quaternion} considered low-rank approximation models based on quaternions, but these models lack physical constraints on the Stokes parameters. In 2020, Flamant et al. proposed a model named quaternion nonnegative matrix factorization (QNMF) in \cite{flamant2020quaternion}, which takes into account the physical constraints of the Stokes parameters.  For solving the QNMF model, they proposed a quaternion alternating least squares (QALS) algorithm. Like nonnegative matrix factorization, the QNMF is also not unique. To address this issue, Pan et al in \cite{pan2022separable} introduced the separability constraint into the QNMF model, and proposed a block coordinate descent algorithm to solve the model. 

For color image processing, the history of using quaternions matrix methods to process RGB color images can be traced back over twenty years. For example, \cite{le2003quaternion} proposed quaternion principal component analysis for color images. \cite{ke2023quasi} extended nonnegative matrix factorization model to quaternions and assumed that both factor matrices $W$ and $H$ are quternions with their imaginary part being nonnegative. Their model will lead to an embarrassing flaw in RGB color image processing because not only the real component of the resulted approximation will not vanish, but also the nonnegative constraints on the imaginary part of resulted approximation cannot be guaranteed. Hence, instead of computing the factor matrices of a given quaternion matrix, the researchers in \cite{lyu2024randomized,song2021low} considered the low rank approximation models for pure quaternions and applied the models onto color images. 

\subsection{Contributions and outline of this paper }

This paper will discuss a simple model for RGB color images and polarization images under the unified framework of quaternion nonnegative matrix factorization and provide a hierarchical algorithm accordingly. The contributions are summarized as follows.

1) We explore the flexibility of the QNMF model in processing RGB color images and show the feature images in the experiment.
2) We propose hierarchical nonnegative least squares for the unified QNMF model and analyze the convergence of the algorithm.
3) We test the proposed method in the polarization image  and color facial image representation. Compared to state-of-the-art techniques, the experimental results demonstrate the effectiveness of the hierarchical nonnegative least squares method.

The rest paper is organized as follows.
In Section 2, we briefly review quaternions,  and quaternion nonnegative matrix factorization model.  Section 3  proposes the quaternion  hierarchical nonnegative least squares to compute the factor matrices. Section 4 validates the effectiveness of the proposed method in numerical experiments on realistic polarization image and RGB color database. 
The concluding remarks are given in Section 5.

\section{Quaternion Matrix Factorization}

We will first introduce notations used throughout this paper.  The real number field, the complex number field and the quaternion algebra are denoted by $\mathbb{R}$, $\mathbb{C}$ and $\mathbb{H}$ respectively. Unless otherwise specified, the
lowercase letters represent real
numbers, the bold lowercase letters represent real vectors, and the bold capital letters  represent the real matrices, such as $a\in \mathbb{R}$, $\mathbf{a}\in \mathbb{R}^n$ and $\mathbf{A} \in \mathbb{R}^{m\times n}$. 
The numbers, vectors, and matrices under the quaternion field are represented by the corresponding symbols with breve, for example $ \breve{a} \in \mathbb{H}$,  $\mathbf{\breve{a}}\in \mathbb{H}^n$ and $\mathbf{\breve{A}}\in \mathbb{H}^{m\times n}$.

Generally, the quaternion field $\mathbb{H}$ is represented in the following Cartesian form,
$$ 
\breve{q}=q_0+\mathtt{i}q_{1}+\mathtt{j}q_2+\mathtt{k}q_3,
$$
where $q_0,q_1,q_2,q_3 \in \mathbb{R}$, and $\mathtt{i},\mathtt{j},\mathtt{k}$ are imaginary units such that 
 $$\mathtt{i}^2=\mathtt{j}^2=\mathtt{k}^2=-1,  \quad \mathtt{i}\mathtt{j}=-\mathtt{j}\mathtt{i}=\mathtt{k}, \quad \mathtt{j}\mathtt{k}=-\mathtt{k} \mathtt{j}= \mathtt{i},\quad \mathtt{k}\mathtt{i}=-\mathtt{i}\mathtt{k}=\mathtt{j},\quad \mathtt{i}\mathtt{j}\mathtt{k}=-1. $$
Any quaternion $\breve{q}$ can be simply written as $\breve{q}=\textit{Re}~ \breve{q}+ \textit{Im}~\breve{q}$ with real component $\textit{Re}~\breve{q}= q_0$ and imaginary component $\textit{Im}~\breve{q}= \mathtt{i}q_1+ \mathtt{j}q_2+\mathtt{k}q_3$. A quaternion $\breve{q}$ is called pure quaternion if its real component $\textit{Re}~\breve{q}=0$.  The quaternion conjugate $\bar{\breve{q}}$ and the modulus $|\breve{q}|$ of $\breve{q}$ are defined as 
$$\bar{\breve{q}}\doteq\textit{Re}~\breve{q} -\textit{Im}~\breve{q}=q_0-\mathtt{i} q_1-\mathtt{j}q_2-\mathtt{k}q_3,\quad  |\breve{q}|\doteq \sqrt{\breve{q}\bar{\breve{q}}}=\sqrt{q_0^2+q_1^2+q_2^2+q_3^2}.$$ 
A quaternion is a unit quaternion if its modulus equals to 1, i.e., $|\breve{q}|=1$. 
The dot product of two quaternions $\breve{a}=a_0+a_1 \tt{i}+a_2 \tt{j}+a_3\tt{k}$ and $\breve{b}=b_0+b_1 \tt{i}+b_2 \tt{j}+b_3\tt{k}$ is defined by
$\breve{a} \cdot \breve{b} = a_0b_0+a_1b_1+a_2b_2+a_3b_3$.
Similarly, for 
quaternion matrix $\breve{\mathbf{Q}}=(\breve{q}_{uv})\in \mathbb{H}^{m\times n}$, we denote its transpose $\breve{\mathbf{Q}}^T=(\breve{q}_{vu})\in \mathbb{H}^{n\times m}$ and its conjugate-transpose $ \breve{\mathbf{Q}}^*=(\bar{\breve{q}}_{vu})\in \mathbb{H}^{n\times m}$. We use $\bar{\breve{\mathbf{Q}}}=(\bar{\breve{q}}_{uv})\in \mathbb{H}^{m\times n}$ to denote the entry-wise conjugate of $\breve{\mathbf{Q}}$.
 
\subsection{Problem Statement}

Quaternion nonnegative matrix factorization (QNMF) was first proposed in \cite{flamant2020quaternion} to approximate polarization signals. Polarization describes the shapes and geometrical orientations of the oscillations, is one of the primary characteristics of transverse waves. The state of polarization can be characterized by the four components of the Stokes vector which are reformulated in quaternions. We take the polarization image as an example. Given a polarization image, each pixel contains light intensity and its polarization information, 
$$
\breve{m}_{st}= m^0_{st}+m^1_{st}\mathtt{i}+m^2_{st}\mathtt{j}+m^3_{st}\mathtt{k}\in \mathbb{H}_{S},
$$ 
where the set of nonnegative quaternions  $\mathbb{H}_S\subset \mathbb{H}$ is given 
\begin{equation}\label{Hs}
\mathbb{H}_S \doteq \{\breve{q}\in \mathbb{H}|\textit{Re}~q \geq 0 ~~ and ~~ |\textit{Im}~q|^2 \leq (\textit{Re}~q)^2\}.
\end{equation}
The constraints on $\mathbb{H}_S$ are due to the physical constraints of Stokes parameters. 

Given data matrix $\breve{\mathbf{M}}\in \mathbb{H}^{m\times n}_S$, quaternion nonnegative matrix factorization  finds $\breve{\mathbf{W}}\in \mathbb{H}^{m\times r}_S$ and $\mathbf{H}\in \mathbb{R}^{r\times n}_+$ such that
\begin{equation}\label{QNMF-WH}
\breve{\mathbf{M}}\approx \breve{\mathbf{W}}\mathbf{H},
\end{equation}
where $\breve{\mathbf{W}}$ and $\mathbf{H}$  are called the source matrix and the activation matrix respectively.  

On the other hand, it is known that for an RGB image, each pixel encodes three channels of Red, Green and Blue. Naturally, an RGB image can be represented as pure quaternions, that is, 
$\breve{\mathbf{M}}=\mathbf{M}_R \mathtt{i}+ \mathbf{M}_G \mathtt{j}+ \mathbf{M}_B \mathtt{k}\in \mathbb{H}^{m\times n},$
where $\mathbf{M}_R\in \mathbb{R}^{m\times n}_+$, $\mathbf{M}_G\in \mathbb{R}^{m\times n}_+$, $\mathbf{M}_B\in \mathbb{R}^{m\times n}_+$ are image matrices  in channels Red, Green and Blue respectively. Define 
\begin{equation}\label{Hn}
\mathring{\mathbb{H}}_{+}\doteq \{\breve{q}= q_1 \mathtt{i}+q_2\mathtt{j}+q_3\mathtt{k}\in \mathbb{H}~|~ q_1 \geq 0,~~ q_2\geq 0,~~ q_3\geq 0\},
\end{equation}
we have $\breve{\mathbf{M}}\in \mathring{\mathbb{H}}^{m\times n}_+$.
Similar to QNMF, we find source matrix $\breve{\mathbf{W}}\in \mathring{\mathbb{H}}^{m\times r}_+$, and the activation matrix $\mathbf{H}\in \mathbb{R}^{r\times n}_+$, such that $\breve{\mathbf{M}}\approx \breve{\mathbf{W}}\mathbf{H}$.

The above two problems are quaternion matrix factorization problems under different constrains. In the following, we consider the following  general problem: given quaternion matrix $\breve{\mathbf{M}}\in \mathbb{H}^{m\times n}_c$,  find $\breve{\mathbf{W}}\in \mathbb{H}^{m\times r}_c$, and $\mathbf{H}\in \mathbb{R}^{r\times n}_+$, such that
\begin{equation}\label{WH}
\min_{\breve{\mathbf{W}},\mathbf{H}} \|\breve{\mathbf{M}}-\breve{\mathbf{W}}\mathbf{H}\|,
\end{equation}
where 
\begin{itemize}
\item  $\mathbb{H}^{m\times r}_c\doteq \mathbb{H}^{m\times r}_S$ if the given  $\breve{\mathbf{M}}\in \mathbb{H}^{m\times n}_S$, for example, $\breve{\mathbf{M}}$ is polarization image matrix.

\item $\mathbb{H}^{m\times r}_c\doteq \mathring{\mathbb{H}}^{m\times r}_+$ if the given  $\breve{\mathbf{M}}\in\mathring{\mathbb{H}}^{m\times n}_+$, such as the RGB image.
\end{itemize}

Note that  every entry of the quaternion matrix $\breve{\mathbf{M}}$ is in the constrained quaternion set $\mathbb{H}_c$, it is necessary to show the plausibility of model, more precisely,

\begin{proposition} \label{Prop:1}
If $ \breve{\mathbf{W}}\in \mathbb{H}^{m\times r}_c$, $\mathbf{H} \in \mathbb{R}^{r\times n}_+$, $\breve{\mathbf{M}}=\breve{\mathbf{W}}\mathbf{H}$, then $\breve{\mathbf{M}}\in \mathbb{H}^{m\times n}_c$.
\end{proposition}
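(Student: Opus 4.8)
The plan is to reduce the matrix statement to a property of the scalar set $\mathbb{H}_c$ --- namely that $\mathbb{H}_c$ is a convex cone --- and then apply that property entrywise. Writing out the $(s,t)$ entry of the product gives $\breve{m}_{st}=\sum_{l=1}^{r}\breve{w}_{sl}h_{lt}$, where each $\breve{w}_{sl}\in\mathbb{H}_c$ and each $h_{lt}\geq 0$ is a real scalar. Since multiplication of a quaternion by a nonnegative real merely rescales all four of its components, $\breve{m}_{st}$ is a conic (nonnegative) combination of elements of $\mathbb{H}_c$. Hence it suffices to show, in each of the two cases, that $\mathbb{H}_c$ is closed under (i) multiplication by nonnegative reals and (ii) addition.

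For the RGB case $\mathbb{H}_c=\mathring{\mathbb{H}}_{+}$, both closure properties are immediate: scaling by $h\geq 0$ preserves the sign of each imaginary component and keeps the real part zero, and adding two pure quaternions with nonnegative imaginary components again produces a pure quaternion whose imaginary components are nonnegative. Thus $\mathring{\mathbb{H}}_{+}$ is a convex cone and the claim follows at once.

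For the polarization case $\mathbb{H}_c=\mathbb{H}_S$, closure under nonnegative scaling is a direct computation: if $\textit{Re}~\breve{q}\geq 0$ and $|\textit{Im}~\breve{q}|^{2}\leq(\textit{Re}~\breve{q})^{2}$, then for $h\geq 0$ one has $\textit{Re}(h\breve{q})=h\,\textit{Re}~\breve{q}\geq 0$ and $|\textit{Im}(h\breve{q})|^{2}=h^{2}|\textit{Im}~\breve{q}|^{2}\leq h^{2}(\textit{Re}~\breve{q})^{2}=(\textit{Re}(h\breve{q}))^{2}$. The substantive step --- and the one I expect to be the crux --- is closure under addition, because the constraint $|\textit{Im}~\breve{q}|\leq\textit{Re}~\breve{q}$ describes a second-order (Lorentz) cone, and additivity is not obvious from the defining quadratic inequality alone. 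Here I would take $\breve{a},\breve{b}\in\mathbb{H}_S$ and treat the real and imaginary parts separately: the real part satisfies $\textit{Re}(\breve{a}+\breve{b})=\textit{Re}~\breve{a}+\textit{Re}~\breve{b}\geq 0$, while the triangle inequality for the Euclidean norm on the imaginary part (viewed as a vector in $\mathbb{R}^{3}$) gives $|\textit{Im}(\breve{a}+\breve{b})|\leq|\textit{Im}~\breve{a}|+|\textit{Im}~\breve{b}|\leq\textit{Re}~\breve{a}+\textit{Re}~\breve{b}=\textit{Re}(\breve{a}+\breve{b})$. Squaring this inequality yields $|\textit{Im}(\breve{a}+\breve{b})|^{2}\leq(\textit{Re}(\breve{a}+\breve{b}))^{2}$, so $\breve{a}+\breve{b}\in\mathbb{H}_S$. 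Combining the two closure properties shows $\mathbb{H}_S$ is a convex cone, and the entrywise conic-combination argument then completes the proof.
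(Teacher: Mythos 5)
Your argument is correct. Note, however, that the paper's own ``proof'' of Proposition \ref{Prop:1} contains essentially no argument: the case $\mathbb{H}_c=\mathring{\mathbb{H}}_+$ is declared obvious, and the case $\mathbb{H}_c=\mathbb{H}_S$ is delegated entirely to the cited references \cite{flamant2020quaternion,pan2022separable}. What you have done is supply, in a self-contained way, exactly the content that those references establish: each entry of $\breve{\mathbf{W}}\mathbf{H}$ is a conic combination $\sum_l \breve{w}_{sl}h_{lt}$ of elements of $\mathbb{H}_c$, so the claim reduces to showing $\mathbb{H}_c$ is a convex cone; for $\mathring{\mathbb{H}}_+$ this is immediate, and for $\mathbb{H}_S$ you correctly identify it as a second-order (Lorentz) cone, with closure under addition following from the triangle inequality $|\textit{Im}(\breve{a}+\breve{b})|\leq|\textit{Im}~\breve{a}|+|\textit{Im}~\breve{b}|\leq \textit{Re}~\breve{a}+\textit{Re}~\breve{b}$ (using that $\textit{Re}~\breve{q}\geq 0$ lets one take square roots in the defining inequality). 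This is the standard route --- it is also the reason $\mathbb{H}_S$ admits the positive-semidefinite $2\times 2$ Hermitian reformulation used later in the paper, since the PSD cone pulled back through that linear map is precisely this Lorentz cone --- so your proof is not a different method so much as a complete version of the one the paper outsources. The only cosmetic caveat is that your phrase ``the constraint $|\textit{Im}~\breve{q}|\leq\textit{Re}~\breve{q}$'' silently uses the equivalence with the squared form \eqref{Hs}, which is valid only because $\textit{Re}~\breve{q}\geq 0$ is part of the definition; it is worth stating that explicitly.
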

\begin{proof}
If  $\mathbb{H}_c= \mathring{\mathbb{H}}_+$, the results are obvious. 
If $\mathbb{H}_c=\mathbb{H}_S$, the result is given in \cite{flamant2020quaternion,pan2022separable}. 
\end{proof}

\section{Algorithms}

Like nonnegative matrix factorization, the solution $(\breve{\mathbf{W}},\mathbf{H})$ of QNMF is generally not unique under permutations and scaling. To solve the QNMF problem, the alternating least squares framework is taken into account in general. That is, at each iteration, one of the two factors is fixed and the other is updated such that the objective function \eqref{WH} is reduced.  The framework is given below. 

\begin{algorithm}[H]
\caption{ Quaternion ALS Framework \label{algo:qhnls}}
\begin{algorithmic}[1]
\REQUIRE Data matrix $\breve{\mathbf{M}} \in \mathbb{H}_c$, initial matrices $(\breve{\mathbf{W}}_0,\mathbf{H}_0) \in \mathbb{H}^{m\times r}_c\times \mathbb{R}^{r\times n}_+$. 

\ENSURE  Matrices $(\breve{\mathbf{W}},\mathbf{H})$   such that $ \breve{\mathbf{M}}\approx\breve{\mathbf{W}}\mathbf{H}$, where $(\breve{\mathbf{W}},\mathbf{H}) \in \mathbb{H}^{m\times r}_c\times \mathbb{R}^{r\times n}_+$ .

\FOR{$t = 1, 2,\cdots$}
\STATE  $\breve{\mathbf{W}}^{(t)}=\mathop{\arg\min}\limits_{\breve{\mathbf{W}}\in \mathbb{H}^{m\times r}_c}\|\breve{\mathbf{M}}-\breve{\mathbf{W}}\mathbf{H}^{(t-1)}\|^2_{F}$.

\STATE $\mathbf{H}^{(t)}=\mathop{\arg\min}\limits_{ \mathbf{H}\in \mathbb{R}^{r\times n}_+}\|\breve{\mathbf{M}}-\breve{\mathbf{W}}^{(t)}\mathbf{H}\|^2_{F}$.
 
\ENDFOR
\end{algorithmic}
\end{algorithm}
The least square solution of factor matrices $(\breve{\mathbf{W}},\mathbf{H})$ is given by
\begin{eqnarray}\label{ALS}
\breve{\mathbf{W}}^{(t)}&=& \mathbb{P}_{\mathbb{H}_c}[\mathbf{M}\mathbf{H}^{(t-1)T}\big(\mathbf{H}^{(t-1)}\mathbf{H}^{(t-1)T}\big)^{-1}];\label{ALS-W}\quad\quad
\\
\mathbf{H}^{(t)}&=&\mathbb{P}_{\mathbb{R}_+}\big[(\text{Re}[\breve{\mathbf{W}}^{(t)T}
\bar{\breve{\mathbf{W}}}^{(t)}])^{-1}
\text{Re}[\breve{\mathbf{W}}^{(t)T}
\bar{\breve{\mathbf{M}}}^{(t)}])
 \big]\label{ALS-H}.
\end{eqnarray}
If $\mathbb{H}_c=\mathbb{H}_S$, the iteration of $(\breve{\mathbf{W}},\mathbf{H})$ is given in \cite{flamant2020quaternion}. If  $\mathbb{H}_c=\mathring{\mathbb{H}}$, the iteration of $\breve{\mathbf{W}}$ is the similar but with different projection  onto  $\mathring{\mathbb{H}}$. 

We remark that the above iteration is simple and rough. Moreover, the iteration of $\mathbf{H}$ needs the projection onto $\mathbb{R}_+$ which may lead to many zeros in $\mathbf{H}$. This would cause the failure of the iterations.  Hence, we will consider hierarchical alternating least squares algorithms for factor matrices, and present them in Algorithm \ref{algo:qnls} and Algorithm \ref{algo:hnls} respectively.

\subsection{Algorithm for Solving $\breve{\mathbf{W}}$}

Let $F(\breve{\mathbf{W}},\mathbf{H})=\|\breve{\mathbf{M}}-\breve{\mathbf{W}}\mathbf{H}\|^2_F$, we have
\begin{eqnarray*}
F(\breve{w}_{tl})
= \sum^m_{t=1}\|\breve{\mathbf{M}}_{t,:}-\sum_p \breve{w}_{tp}\mathbf{H}_p\|^2_F = \sum^m_{t=1}\|\breve{\mathbf{M}}_{t,:}-\sum_{p\neq l}\breve{w}_{tp}\mathbf{H}_{p,:}-\breve{w}_{tl}\mathbf{H}_{l,:}\|^2_F.
\end{eqnarray*}
Then 
$$ 
\frac{\partial F(\breve{w}_{tl})}{\partial \breve{w}_{tl}}=-2(\breve{\mathbf{M}}_{t,:}-\sum_{p\neq l}\breve{w}_{tp}\mathbf{H}_{p,:}-\breve{w}_{tl}\mathbf{H}_{l,:})\mathbf{H}^T_{l,:}=0.
$$
That is
\begin{equation}\label{eq:source}
\breve{w}_{tl}=\frac{\breve{\mathbf{M}}_{t,:}\mathbf{H}^T_{l,:}-\sum\limits_{p\neq l}\breve{w}_{tp}\mathbf{H}_{p,:}\mathbf{H}^T_{l,:}}{\mathbf{H}_{l,:}\mathbf{H}^T_{l,:}}.
\end{equation}
Therefore, the $k$-th iteration of $\breve{\mathbf{W}}_{:,l}$ is given by
\begin{equation}
\breve{\mathbf{w}}^{(k)}_{:,l}=\frac{\breve{\mathbf{M}}\mathbf{H}^T_{l,:}-\sum\limits_{p<l}\mathbf{H}_{p,:}\mathbf{H}^T_{l,:}\breve{\mathbf{W}}^{(k)}_{:,p}-\sum\limits_{p> l}\mathbf{H}_{p,:}\mathbf{H}^T_{l,:}\breve{\mathbf{W}}^{(k-1)}_{:,p}}{\mathbf{H}_{l,:}\mathbf{H}^T_{l,:}}.
\end{equation}

Since the source matrices $\breve{\mathbf{W}}$ of different applications have their own properties, this requires us to project $\breve{\mathbf{W}}$ onto their specific set of constraints. Specifically,


\textbf{(1)} For the given polarization image matrix $\breve{\mathbf{M}}\in \mathbb{H}^{m\times n}_S$, the source matrix $\breve{\mathbf{W}}\in \mathbb{H}^{m\times r}_S$. To project $\breve{\mathbf{W}}$ onto $\mathbb{H}_S$, we note that the constraint set $\mathbb{H}_S$ in \eqref{Hs} is equivalent to the positive semi-definiteness of a $2 \times 2$ Hermitian matrix $\mathbf{J}$. 

More precisely, for any quaternion $\breve{q}=q_0+q_1 \tt{i}+ q_2\tt{j}+ q_3 \tt{k}$,  we write the Hermitian matrix 
$$
\mathbf{J}=\frac{1}{2}\left(
           \begin{array}{cc}
             q_0+q_2 & q_3+\mathbf{i}q_1  \\
            q_3-\mathbf{i}q_1   & q_0-q_2\\
           \end{array}
         \right)\in \mathbb{C}^{2\times 2}.
$$
Notice that $\tr{(\mathbf{J})}=q_0$, and $det{(\mathbf{J})}= q^2_0-q^2_1-q^2_2-q^2_3$. 
Then we deduce that \eqref{Hs} $\Leftrightarrow$ $\tr{(\mathbf{J})}\geq 0$ and $det{(\mathbf{J})} \geq 0$. In other words, $\mathbf{J}$ should be semi-positive definite. To guarantee $\mathbf{J}$  to be semi-positive definite, we project $\mathbf{J}$ onto the set of non-negative Hermitian matrices $\mathbb{Q}^{2\times 2}_+$, i.e., 
$$
\mathbb{P}_{\mathbb{Q}^{2\times 2}_+}(\mathbf{J})=\sum^2_{t=1}\max(0,\eta_t)\mathbf{u}_t\mathbf{u}^*_t\doteq 
\left(
           \begin{array}{cc}
             a & c \\
             \bar{c}   & b\\
           \end{array}
         \right),
$$
where $\eta_t$ and $\mathbf{u}_t$ are the $t$-th eigenvalue and eigenvector of the matrix $\mathbf{J}$. Now the projection of $\breve{q}$ onto $\mathbb{H}_S$ is given by
\begin{equation}\label{ProjHS}
\mathbb{P}_{\mathbb{H}_S}(\breve{q})=a+b+2\text{Im}(c)\tt{i}+(a-b)\tt{j}+2\text{Re}(c)\tt{k}.
\end{equation}
Therefore we obtain the $k$-th iteration of the source matrix $\breve{\mathbf{W}}$ in $\mathbb{H}_S$:
\begin{equation}\label{eq:W_polar}
\breve{\mathbf{W}}^{(k+1)}_{:,l}=\mathbb{P}_{\mathbb{H}_S}(\frac{\breve{\mathbf{M}}\mathbf{H}^T_{l,:}-\sum\limits_{p<l}\mathbf{H}_{p,:}\mathbf{H}^T_{l,:}\breve{\mathbf{W}}^{(k+1)}_{:,p}-\sum\limits_{p> l}\mathbf{H}_{p,:}\mathbf{H}^T_{l,:}\breve{\mathbf{W}}^{(k)}_{:,p}}{\mathbf{H}_{l,:}\mathbf{H}^T_{l,:}}),\quad l=1,\cdots,r.
\end{equation}

\textbf{(2)} For the given RGB image matrix $\breve{\mathbf{M}}\in \mathring{\mathbb{H}}_+$, the source matrix $\breve{\mathbf{W}}\in \mathring{\mathbb{H}}_+$, then the projection of $\breve{\mathbf{W}}$ onto $\mathring{\mathbb{H}}_+$ is given by:
for each entry $\breve{w}_{tl}=
w^R_{tl}\mathtt{i}+w^G_{tl}\mathtt{j}+w^B_{tl}\mathtt{k}$ 
$(t=1,\cdots,m;~l=1,\cdots,r)$, 
\begin{equation}\label{ProjH0}
\mathbb{P}_{\mathring{\mathbb{H}}_+}(\breve{w}_{tl})=\frac{1}{2}\big(\max\{\xi,|w^R_{tl}|+w^R_{tl}\}\mathtt{i}+\max\{\xi,|w^G_{tl}|+w^G_{tl})\}\mathtt{j}+\max\{\xi,|w^B_{tl}|+w^B_{tl}\}\mathtt{k}\big).
\end{equation}
where $\xi\lll 1$ is very small positive number to prevent the column of $\breve{\mathbf{W}}$ being zero. Hence for simplicity, we write the $k$-th iteration of the source matrix $\breve{\mathbf{W}}$ in $\mathring{\mathbb{H}}_+$:
\begin{equation}\label{eq:W_RGB}
\breve{\mathbf{W}}^{(k+1)}_{:,l}=\mathbb{P}_{\mathring{\mathbb{H}}_+}(\frac{\breve{\mathbf{M}}\mathbf{H}^T_{l,:}-\sum\limits_{p<l}\mathbf{H}_{p,:}\mathbf{H}^T_{l,:}\breve{\mathbf{W}}^{(k+1)}_{:,p}-\sum\limits_{p> l}\mathbf{H}_{p,:}\mathbf{H}^T_{l,:}\breve{\mathbf{W}}^{(k)}_{:,p}}{\mathbf{H}_{l,:}\mathbf{H}^T_{l,:}}),\quad l=1,\cdots,r.
\end{equation}

\begin{algorithm}[H]
\caption{ Hierarchical NLS-WQ (HNLS-WQ) \label{algo:qnls}}
\begin{algorithmic}[1]
\REQUIRE Matrix $\breve{\mathbf{M}} \in \mathbb{H}^{m\times n}_c$, and $\mathbf{H} \in \mathbb{R}^{r\times n}_+$. Initial matrix $\breve{\mathbf{W}}^{(0)}\in \Omega$, maximum iteration $iter$ and stopping criterion $\epsilon$. 

\ENSURE  Matrix $\breve{\mathbf{W}}$   such that $\min\limits_{\breve{\mathbf{W}}\in \mathbb{H}^{m\times r}_c}\|\breve{\mathbf{M}}-\breve{\mathbf{W}}\mathbf{H}\|^2_{F}$.

\STATE $\mathbf{A}=\mathbf{H}\mathbf{H}^T$, $\mathbf{B}=\breve{\mathbf{M}}\mathbf{H}^T$.

\WHILE{$k< iter$ or $\delta<\epsilon_0$}
\FOR{$l$ = 1 : r}
\STATE  $\mathbf{c}=\sum\limits^{l-1}_{t=1}a_{tl}\breve{\mathbf{W}}^{(k+1)}_{:,t}+\sum\limits^{r}_{t=l+1}a_{tl}\breve{\mathbf{W}}^{(k)}_{:,t}$;
\STATE $\breve{\mathbf{W}}^{(k+1)}_{:,l}= \frac{\mathbf{B}_{:,l}-\mathbf{c}}{a_{ll}}$;
\STATE \textbf{Projection onto $\Omega$}:
\begin{itemize}
\item If $\Omega=\mathbb{H}^m_{S}$: $\breve{\mathbf{W}}^{(k+1)}_{:,l}=\mathbb{P}_{\mathbb{H}_S}(\breve{\mathbf{W}}^{(k+1)}_{:,l})$.

\item If $\Omega=\mathring{\mathbb{H}}_+$: 
$\breve{\mathbf{W}}^{(k+1)}_{:,l}=\mathbb{P}_{\mathring{\mathbb{H}}_+}(\breve{\mathbf{W}}^{(k+1)}_{:,l})$
\end{itemize}

\ENDFOR
\STATE $\delta=\frac{\|\breve{\mathbf{W}}^{(k+1)}-\breve{\mathbf{W}}^{(k)}\|_F}{\|\breve{\mathbf{W}}^{(1)}-\breve{\mathbf{W}}^{(0)}\|_F}$.
\ENDWHILE
\end{algorithmic}
\end{algorithm}

\begin{remark}
If $\breve{\mathbf{M}}\in \mathbb{H}^{m\times n}_S$, HNLS-WQ needs $(2nr^2+8rmn)+k(4mr^2+26mr)$ flops in total; and if 
$\breve{\mathbf{M}}\in \mathring{\mathbb{H}}^{m\times n}_+$, HNLS-WQ needs $(2nr^2+6rmn)+k(3mr^2+6mr)$ flops in total; here $k$  refers to the number of iterations. The initial input for Algorithm \ref{algo:qnls} can be generated simply randomly. 
\end{remark} 

\subsection{Algorithm for Solving $\mathbf{H}$}

For solving $\mathbf{H}$, we rewrite  objective function \eqref{WH}, 
\begin{eqnarray*}
F(h_{lq})=\sum\limits^n_{q=1}\| \breve{\mathbf{M}}_{:,q}- \sum^r_{s\neq l}\breve{\mathbf{W}}_{:,s}h_{sq}-\breve{\mathbf{W}}_{:,l}h_{lq}\|^2_2,\\
\end{eqnarray*}
 where $h_{lq}$ stands for the $(l,q)$-th entry of matrix $\mathbf{H}$, with $l=1,2,\cdots, r$ and $q=1,2,\cdots,n$.

The gradient of  $F(h_{lq})$ is 
\begin{equation*}
\frac{\partial F(h_{lq})}{\partial h_{lq}}=-2 \breve{\mathbf{W}}^T_{:,l}\big( \breve{\mathbf{M}}_{:,q}-\sum^r_{s\neq l}\breve{\mathbf{W}}_{:,s}h_{sq}-\breve{\mathbf{W}}_{:,l}h_{lq}\big)=0.
\end{equation*}
Because of the nonnegative constrains on $\mathbf{H}$, $h_{lq}$ is obtained by 
\begin{equation}\label{hpj}
h_{lq}= \mathtt{P}_{\mathbb{R}_+}\Big(\frac{ \breve{\mathbf{W}}^T_{:,l}\breve{\mathbf{M}}_{:,q}- \sum\limits_{s\neq l}\breve{\mathbf{W}}^T_{:,l}\breve{\mathbf{W}}_{:,s}h_{sq}}{\breve{\mathbf{W}}^T_{:,l} \breve{\mathbf{W}}_{:,l}}\Big),\quad q=1,2,\cdots, n.
\end{equation}
Note that the row of $\mathbf{H}$ may equal to zero thanks to the projection $ \tt{P}_{\mathbb{R}_+}$, which will lead to failure of algorithms. To address this issue, we use a very small positive number $\xi$ to replace lower bound $0$ in the projection $ \tt{P}_{\mathbb{R}_+}$ in practice. That is, we define projection $ \tt{P}_{\mathbb{R}_{+\xi}}$:
$$
 \tt{P}_{\mathbb{R}_{+\xi}}(x)=\left\{\begin{array}{cl}
x, & \mbox{if} \ x\geq 0,\\
\xi, & \mbox{if} \  x<0,\\
\end{array}\right.
$$
where $\xi\lll 1$ is a very small positive number.
The row of $\mathbf{H}$ can be updated successively by 
\begin{equation}\label{Hrow_0}
\mathbf{H}^{(k+1)}_{l,:}= \tt{P}_{\mathbb{R}^n_{+\xi}}\Big(\mathbf{H}'^{(k+1)}_{l,:}\Big), 
\end{equation}
where
$$
 \mathbf{H}'^{(k+1)}_{l,:}=\frac{ \breve{\mathbf{W}}^T_{:,l}\breve{\mathbf{M}}- \sum\limits_{s< l}\breve{\mathbf{W}}^T_{:,l}\breve{\mathbf{W}}_{:,s}\mathbf{H}^{(k+1)}_{s,:}
 -\sum\limits_{s>l}\breve{\mathbf{W}}^T_{:,l}\breve{\mathbf{W}}_{:,s}\mathbf{H}^{(k)}_{s,:}}{\breve{\mathbf{W}}^T_{:,l} \breve{\mathbf{W}}_{:,l}}.
$$
Here $\mathbf{H}_{l,:}$ refers to the $l$-th row of $\mathbf{H}$. 
The iteration is presented in Algorithm \ref{algo:qhnls}, namely  Hierarchical NLS-HR. 
\begin{algorithm}[H]
\caption{ Hierarchical NLS-HR (HNLS-HR) \label{algo:hnls}}
\begin{algorithmic}[1]
\REQUIRE Matrix $\breve{\mathbf{M}} \in \mathbb{H}^{m\times n}_c$, and $\breve{\mathbf{W}} \in \mathbb{H}^{m\times r}_c$. Initial matrix $\mathbf{H}^{(0)}\in \mathbb{R}^{r\times n}_+$, maximum iteration $iter$ and stopping criterion $\epsilon$. 

\ENSURE  Matrix $\mathbf{H}$   such that $\min\limits_{\mathbf{H}\geq \xi}\|\breve{\mathbf{M}}-\breve{\mathbf{W}}\mathbf{H}\|^2_{F}$.

\STATE $\mathbf{A}= \breve{\mathbf{W}}^T\breve{\mathbf{W}} $, $\mathbf{B}=\breve{\mathbf{W}}^T\breve{\mathbf{M}} $.

\WHILE{$k< iter$ or $\delta<\epsilon_0$}
\FOR{$l$ = 1 : r}
\STATE  $\mathbf{c}=\sum\limits^{l-1}_{s=1}a_{ls}\mathbf{H}^{(k+1)}_{s,:}+\sum\limits^{r}_{s=l+1}a_{ls}\mathbf{H}^{(k)}_{s,:}$;
\STATE $\mathbf{H}^{(k+1)}_{l,:}= \frac{\mathbf{B}_{l,:}-\mathbf{c}}{a_{ll}}$;
\STATE \textbf{Projection onto $\mathbb{R}^n_{+\xi}$:} $\mathbf{H}^{(k+1)}_{l,:}=\max(\xi, \mathbf{H}^{(k+1)}_{l,:})$.
\ENDFOR
\STATE $\delta=\frac{\|\mathbf{H}^{(k+1)}-\mathbf{H}^{(k)}\|_F}{\|\mathbf{H}^{(1)}-\mathbf{H}^{(0)}\|_F}$.
\ENDWHILE
\end{algorithmic}
\end{algorithm}

\begin{remark}
 HNLS-HR needs $8(r^2m+rmn)+k(2nr^2+nr)$ flops in total, here $k$  refers to the number of iterations.  The initial input for Algorithm \ref{algo:hnls} can be generated simply randomly. 
\end{remark}  

\subsection{Discussion}

We remark that both HNLS-WQ and HNLS-HR are a block-coordinate descent method. For HNLS-WQ method for $\breve{\mathbf{W}}$, we note that the column $\breve{\mathbf{W}}_{:,l}$ is in closed set $\mathbb{H}^m_c$, where $\mathbb{H}^m_c$ can be $\mathbb{H}^m_S$ or $\mathring{\mathbb{H}}^m_+$; and the subproblems 
\begin{equation}\label{Wop}
\breve{\mathbf{W}}_{:,l}=\mathop{\arg\min}_{\breve{\mathbf{W}}_{:,l}\in \mathbb{H}^m_c}\|\breve{\mathbf{M}}-\sum_{s\neq l}\breve{\mathbf{W}}_{:,s}\mathbf{H}_{s,:}-\breve{\mathbf{W}}_{:,l}\mathbf{H}_{l,:}\|^2_F , \quad l=1,\cdots, r,
\end{equation}
are strictly convex. Here $\breve{\mathbf{W}}_{:,s}$  refers to the $s$-th column of $\breve{\mathbf{W}}$. 

For HNLS-HR method for $\mathbf{H}$, the row $\mathbf{H}_{l,:}$ is in closed convex set  $\mathbb{R}^{n}_{\xi}$, and the subproblems
\begin{equation}\label{Hop}
\mathbf{H}_{l,:}=\mathop{\arg\min}_{\mathbf{H}_{l,:}\geq \xi}\|\breve{\mathbf{M}}-\sum_{s\neq l}\breve{\mathbf{W}}_{:,s}\mathbf{H}_{s,:}-\breve{\mathbf{W}}_{:,l}\mathbf{H}_{l,:}\|^2_F , \quad l=1,\cdots, r,
\end{equation}
are strictly convex. Here $\mathbf{H}_{s,:}$ refers to the $s$-th row of $\mathbf{H}$. 
We can derive that the limit points of the iterates \eqref{eq:W_polar} for  $\breve{\mathbf{W}}$ in $\mathbb{H}_S$  (or \eqref{eq:W_RGB} for $\breve{\mathbf{W}}$ in $\mathring{\mathbb{H}}_+$ ) are stationary points;  and the limit points of iterates  \eqref{Hrow_0} are stationary points, based on the results in\cite{powell1973search,bertsekas1997nonlinear}. The convergence results are presented as follows.
\begin{theorem}\label{thm:convergence}
The points of algorithm \ref{algo:qnls} and algorithm \ref{algo:hnls} will converge to  stationary points respectively.
\end{theorem}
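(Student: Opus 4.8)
\section*{Proof proposal}

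The plan is to recognize both Algorithm~\ref{algo:qnls} and Algorithm~\ref{algo:hnls} as \emph{exact cyclic block coordinate descent} (Gauss--Seidel) schemes applied to the smooth objective $F(\breve{\mathbf{W}},\mathbf{H})=\|\breve{\mathbf{M}}-\breve{\mathbf{W}}\mathbf{H}\|_F^2$ over a Cartesian product of closed convex sets, and then to invoke the classical convergence theorem for such schemes (the one underlying \cite{powell1973search,bertsekas1997nonlinear}): if the objective is continuously differentiable on $X=X_1\times\cdots\times X_r$ with each $X_l$ closed and convex, and if each single-block minimization attains a \emph{unique} minimizer, then every limit point of the generated sequence is a stationary point. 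Writing each quaternion as its four real coordinates, $F$ is a real quadratic, hence $C^1$; in Algorithm~\ref{algo:qnls} the blocks are the columns $\breve{\mathbf{W}}_{:,l}\in\mathbb{H}^m_c$ (with $\mathbf{H}$ fixed), and in Algorithm~\ref{algo:hnls} the blocks are the rows $\mathbf{H}_{l,:}\in\mathbb{R}^n_{+\xi}$ (with $\breve{\mathbf{W}}$ fixed). So the task reduces to checking two structural hypotheses: that the blockwise feasible sets are closed and convex, and that each block update returns the unique exact minimizer of its subproblem.

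First I would verify convexity of the feasible sets. The set $\mathring{\mathbb{H}}_+$ of \eqref{Hn} is a product of nonnegative half-lines and $\{x\ge\xi\}^n$ is a box, so both are closed and convex; the set $\mathbb{H}_S$ of \eqref{Hs} is the second-order (``ice-cream'') cone $(\mathrm{Re}\,q)^2\ge|\mathrm{Im}\,q|^2$, $\mathrm{Re}\,q\ge0$, which is likewise closed and convex. Next I would argue that each block update is the \emph{exact} Euclidean projection of the unconstrained optimum onto the relevant set, hence the unique minimizer. The key is the rank-one structure: in \eqref{Wop} the unknown column multiplies the single row $\mathbf{H}_{l,:}$, so the subproblem decouples over the rows and its Hessian is the positive scalar $\mathbf{H}_{l,:}\mathbf{H}_{l,:}^T$ times the identity; hence the constrained minimizer coincides with the \emph{Euclidean} projection of the closed-form quotient onto $\mathbb{H}_c$, which is exactly what \eqref{ProjHS} and \eqref{ProjH0} compute. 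Symmetrically, in \eqref{Hop} the unknown row is scaled by the scalar Hessian $\breve{\mathbf{W}}_{:,l}^T\breve{\mathbf{W}}_{:,l}>0$, so the update $\max(\xi,\cdot)$ is the Euclidean projection onto $[\xi,\infty)^n$. Because these scalar Hessians are strictly positive — which the $\xi$-safeguards in \eqref{ProjH0} and in $\mathtt{P}_{\mathbb{R}_{+\xi}}$ guarantee by forbidding zero columns of $\breve{\mathbf{W}}$ and zero rows of $\mathbf{H}$ — each subproblem is strictly convex and its minimizer is unique.

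With exactness and uniqueness in hand, within each algorithm the objective is monotonically non-increasing and bounded below by $0$, so $\{F\}$ converges; coercivity of $F$ in the variable being solved, guaranteed by $\mathbf{H}\mathbf{H}^T\succ0$ in Algorithm~\ref{algo:qnls} and by $\breve{\mathbf{W}}^T\breve{\mathbf{W}}\succ0$ in Algorithm~\ref{algo:hnls}, confines the iterates to a bounded sublevel set, so limit points exist. Applying the cited block-coordinate-descent theorem then yields that every limit point of the iterates \eqref{eq:W_polar}--\eqref{eq:W_RGB} and of \eqref{Hrow_0} is a stationary point of the corresponding constrained problem, which is the assertion of the theorem.

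The main obstacle is the \emph{uniqueness} hypothesis, since Powell's example shows cyclic block descent can cycle without converging when a block minimizer is not unique; the whole argument therefore hinges on the strictly positive scalar Hessians, and hence on the $\xi$-safeguards keeping the denominators away from zero. A second, more technical point is justifying that the projection \eqref{ProjHS} onto $\mathbb{H}_S$ really is the exact Euclidean projection: this requires checking that the linear map sending $\breve{q}$ to the Hermitian matrix $\mathbf{J}$ is, up to a fixed scalar, an isometry, so that projecting $\mathbf{J}$ onto the positive-semidefinite cone $\mathbb{Q}^{2\times2}_+$ coincides with projecting $\breve{q}$ onto $\mathbb{H}_S$. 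Finally I would note that, consistent with the Discussion preceding the theorem, ``converge to stationary points'' is to be read as ``every limit point is stationary''; upgrading this to convergence of the entire sequence would require an extra ingredient (for instance isolated limit points, or a Kurdyka--\L ojasiewicz inequality) that the cited results do not supply.
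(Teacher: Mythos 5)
Your proposal follows essentially the same route as the paper: the paper's entire argument is the observation in the Discussion subsection that both algorithms are exact block-coordinate descent over closed (convex) blocks with strictly convex subproblems, followed by an appeal to \cite{powell1973search,bertsekas1997nonlinear}. Your write-up is in fact more careful than the paper's, since you additionally verify the uniqueness/exact-projection hypotheses and the existence of limit points, and you correctly flag that the conclusion should be read as ``every limit point is stationary.''
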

We remark that HNLS-WQ and HNLS-HR are inspired by hierarchical nonnegative least squares (HNLS) \cite{gillis2012accelerated} that is an exact block-coordinate descent method for solving  nonnegative matrix factorization. The convergence results (Theorem \ref{thm:convergence}) are similar to HNLS (see Theorem 3 in reference \cite{gillis2008nonnegative}).

\begin{remark}
To compute the factor matrices $\breve{\mathbf{W}}$ , we remark that one can employ  HNLS-WQ method or simply the least squares iterates \eqref{ALS-W}. Similarly, for solving $\mathbf{H}$, HNLS-HR or the least squares iterates \eqref{ALS-H} can be used. Therefore, it is necessary to investigate the effectiveness of these iterations. In Section \ref{sec:hyper}, we will consider the following methods.

\begin{itemize}
\item QHALS refers to the iterations that compute $\breve{\mathbf{W}}$ by HNLS-WQ and $\mathbf{H}$ by  HNLS-HR respectively. 

\item Qals-Rhals refers to the method that computes $\breve{\mathbf{W}}$ by iterations          \eqref{ALS-W} and $\mathbf{H}$ by  HNLS-HR respectively.

\item Qhals-Rals refers to the method that computes $\breve{\mathbf{W}}$ by HNLS-WQ and $\mathbf{H}$ by iterations \eqref{ALS-H} respectively. 
\end{itemize}
\end{remark}

\section{Numerical Experiments}\label{sec:hyper}

In this section, we test the proposed methods (QHALS, Qals-Rhals, Qhals-Rals) on  the  polarization images and RGB color facial images.  All experiments were run on Intel(R) Core(TM) i7-5500 CPU @2.20GHZ with 8GB of RAM using Matlab. We compare with the state-of-the-art method QALS for solving the QNMF model \eqref{WH}. Specifically,

\begin{enumerate}
\item If data matrix $\breve{\mathbf{M}}\in \mathbb{H}_S$, we apply the QALS (Quaternion alternating least squares algorithm \cite{flamant2020quaternion}) which is extended from standard ALS to solve the QNMF model.

\item  If data matrix $\breve{\mathbf{M}}\in 
\mathring{\mathbb{H}}_+$, we apply QALS framework  given in \cite{flamant2020quaternion} but with different projection on $\mathring{\mathbb{H}}_+$. 

\end{enumerate}

\begin{remark}
We tested QADMM \cite{sanchez2021automatic} which solves the QNMF model by using quaternion alternating direction method of multipliers. But it runs much slower than the others, and the results are similar with QALS, so we do not present it here.    
\end{remark}

In order to show the effect of these algorithms, we will report the following quality measures: the approximations of the reconstructed image of all the methods, which are defined below.
\begin{enumerate}
\item  The total relative approximation:
\begin{equation}\label{appro}
 \Upsilon=1-\dfrac{\|\breve{\mathbf{M}}-\breve{\mathbf{W}}\mathbf{H}\|_F}{\|\breve{\mathbf{M}}\|_F}.
\end{equation}
  
\item 
 The relative approximation on each component:
 \begin{equation} \label{appro1}
  \Upsilon_{l}= 1- \frac{ \|\mathcal{S}_l(\breve{\mathbf{M}})-\mathcal{S}_l(\breve{\mathbf{W}})\mathbf{H}\|_F}{\|\mathcal{S}_l(\breve{\mathbf{M}})\|_F}, \quad l=0,1,2,3.
\end{equation}
where $\mathcal{S}_l(\breve{\mathbf{M}})$ denotes the $l$-th component of the corresponding quaternion matrix.

\end{enumerate}

The stopping criterion of all the methods for solving QNMF model: 
We stop the methods when the following condition holds,
$$
\frac{e(k)-e(k-1)}{e(k-1)}\leq \delta,
\quad \text{where}\quad e(k)=\quad 
\frac{\|\breve{\mathbf{M}}-\breve{\mathbf{W}}^{(k)}\mathbf{H}^{(k)}\|_F}{\|\breve{\mathbf{M}}\|_F}.
$$
The matrices $(\breve{\mathbf{W}}^{(k)}, \mathbf{H}^{(k)})$ are the solution at iteration $k$. We use the threshold $\delta = 10^{-4}$ for all the data sets. The maximum number of iterations is 1000.  We set the maximum computation time $10^4$ seconds. 
%
\subsection{Polarization Image Representation}

To test the methods, we chose the polarization image "cover"  from polarization image dataset constructed by Qiu in \cite{qiu2021linear}. The image "cover" contains $1024\times 1024$ pixels, and each pixel encodes the information of intensity and polarization. We resized the polarization image to $64\times 64$:
$$
\breve{\mathbf{T}}= \mathbf{T}^0+\mathtt{i} \mathbf{T}^1+\mathtt{j} \mathbf{T}^2+\mathtt{k} \mathbf{T}^3\in \mathbb{H}^{64\times 64}_S.
$$
The component $ \mathbf{T}^0\in \mathbb{R}^{64\times 64}_+$ represents the total intensity, and $( \mathbf{T}^1, \mathbf{T}^2, \mathbf{T}^3)$ stands for its polarization. Fig.~\ref{polarimage} shows the image "cover" captured by a colour camera and polarization image sensor. Its top row shows the picture of "cover" captured by the colour camera with polarization filter at $0^{\circ}$, and the bottom row shows the total intensity $\mathbf{T}^0$ and normalized  polarization $(\mathbf{T}^1/\mathbf{T}^0, \mathbf{T}^2/\mathbf{T}^0)$ measured by four intensities with linear polarizers oriented at $0^{\circ}, 45^{\circ}, 90^{\circ},135^{\circ}$.  Note that $ \mathbf{T}^3=0$ here, we do not present it in Fig.~\ref{polarimage}. 

\begin{figure} 
\includegraphics[width= \textwidth]{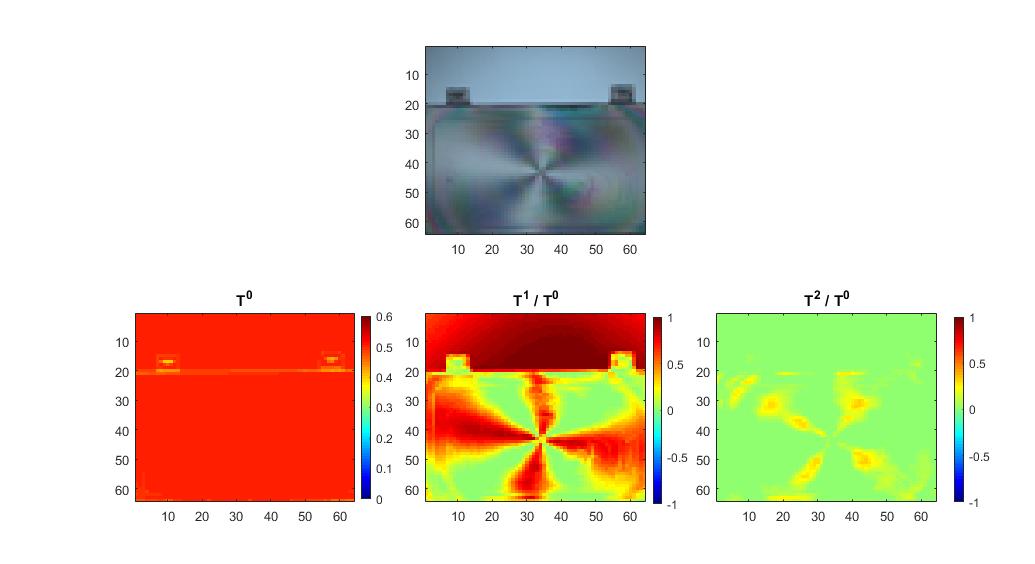}
\caption{Top: image "cover"; Bottom: the total intensity $\mathbf{T}^0$ and the normalized polarization $(\mathbf{T}^1/\mathbf{T}^0, \mathbf{T}^2/\mathbf{T}^0)$. }\label{polarimage}
\end{figure}

To better test the reconstruction effect of the methods for the image "cover", we do not apply the methods directly on $\breve{\mathbf{T}}$, instead, we divide image "cover" $\breve{\mathbf{T}}$ into $8\times 8$ small image blocks, and each block has $8\times 8$ pixels. The data matrix $\breve{\mathbf{M}}\in\mathbb{H}^{64\times 64}_S $ is then generated by these 64 blocks; that is, each column of $\breve{\mathbf{M}}$ is generated by vectorizing every image block. 

For all the methods, we apply the quaternion successive projection algorithm (QSPA) proposed in \cite{pan2022separable} to generate the initial guess. QSPA is a fast heuristic method that can generate good initial guesses efficiently.  Table  \ref{table:Po_result} reports the approximations of the reconstructed image of all the methods.  The best results are highlighted in bold, and the second best are highlighted in underline. From Table \ref{table:Po_result}, we observe that:

\begin{itemize}
\item In terms of the relative total approximation, and the approximations of all components, as the number of features $r$ increases,  the approximations of all the proposed methods increase, except the QALS for $r=16$. When $r=2,4,8$, the approximations obtained by QHALS are better than those by the other methods; and the Qals-Rhals gets the second best results in most cases. We notice that when $r=2$, the results from all the methods are similar. 
The reason may be that the initial results given by QSPA are sufficiently good to meet the stopping criteria. We remark that QHALS and Qhals-Rals all fail when $r=16$.

\item In terms of running time, the Qals-Rhals is the fastest method, andthe QALS is the second fast method. The QHALS and Qhals-Rhals are much slower than the other two methods when $r=8$, and fail when $r=16$. Both two methods involve the hierarchical iteration Algorithm \ref{algo:qnls} whose computational cost is high. 
 
\end{itemize}

The visualization of reconstructed images by $r=8$ features is shown in Fig.  \ref{polarresult}. The QHALS outperforms the other methods, as shown in the table. Considering both the time cost and approximation results, we conclude that the Qals-Rhals is the best among all the methods. 

\begin{figure}
\includegraphics[height=8.5cm,width=\textwidth]{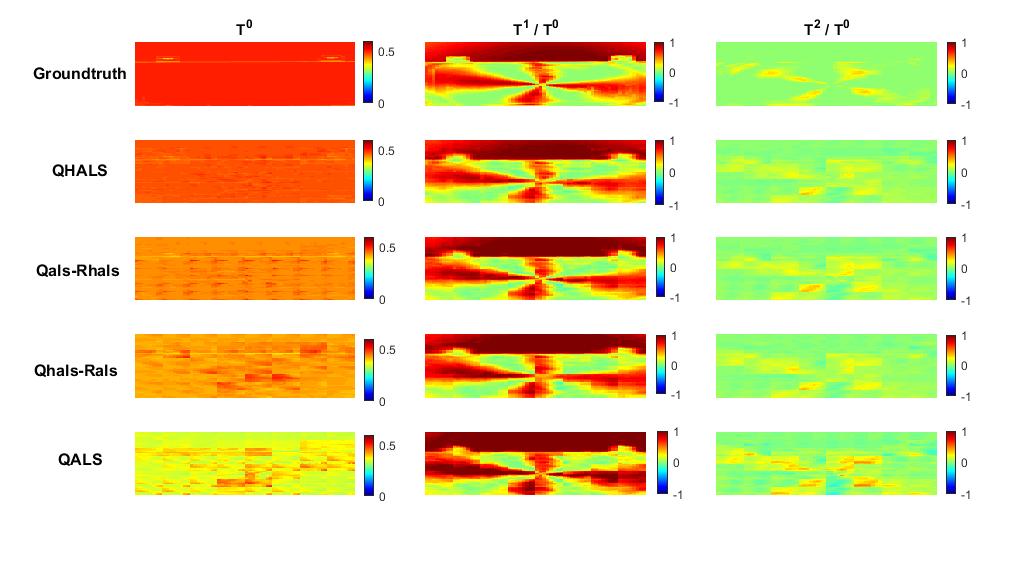}
\caption{Reconstruction results of 
 polarization image "cover'' $(r=8)$.}\label{polarresult}
\end{figure}

\begin{table} 
  \centering 
  \begin{tabular}{|c||c||c|c|c|c||c|}
  \hline
 ~& r&$\Upsilon$& $\Upsilon_0$ &  $\Upsilon_1$ &  $\Upsilon_2$ & Time(s)\\  
 \hline
 \multirow{3}*{QHALS}&2&\textbf{83.88} &\textbf{98.50}&\textbf{69.74}&\textbf{14.04}&1.80\\
  \cline{2-7}&4&\textbf{88.96} &\textbf{98.59}&\underline{80.40}&\textbf{19.02}&3.46\\
   \cline{2-7}&8&\textbf{93.47} &\textbf{98.61}&\textbf{89.75}& 35.62&6573.10\\
  \cline{2-7}&16&-- &--&--&--&--\\ 
  \hline
 \hline
 \multirow{3}*{Qals-Rhals}&2&\textbf{83.88} &\underline{98.47}&\underline{69.73}&\textbf{14.04}&\textbf{0.04}\\
   \cline{2-7}&4&\textbf{88.96} &\underline{98.58}&\underline{80.40}&\textbf{19.02}&\textbf{0.17}\\
    \cline{2-7}&8&\underline{93.07} &\underline{97.34}&\underline{89.42}&\underline{35.98}&\textbf{82.45}\\
    \cline{2-7}&16&\textbf{96.14} &\textbf{98.44}&\textbf{93.85}&\textbf{68.27}&\textbf{163.59}\\
 \hline
 \hline
  \multirow{3}*{Qhals-Rals}&2&\textbf{83.88} &\textbf{98.50}&\textbf{69.74}&\textbf{14.04}&1.77\\
   \cline{2-7}&4&87.90 &93.83&80.72&17.62&3363.00\\
    \cline{2-7}&8&92.15 &96.35&87.75&37.73&6735.20\\
     \cline{2-7}&16&-- &--&--&--&--\\
 \hline
 \hline
 \multirow{3}*{QALS}&2&\textbf{83.88} &\underline{98.47}&\underline{69.73}&\textbf{14.04}&\textbf{0.04}\\
   \cline{2-7}&4&87.20 &92.50&80.32&17.55&51.78\\
    \cline{2-7}&8&88.11&89.64&86.38&35.53&83.63\\
    \cline{2-7}&16&85.17&85.47&85.12&56.03&173.52\\
    \hline
 \end{tabular}
 \caption{Numerical results (in percent) for 
 polarization image "cover''.}\label{table:Po_result}
\end{table}

\subsection{RGB Color Image Representation}

In this experiment, we test the methods on a subset of the AR database \cite{wright2008robust} which consists 50 men and 50 women. Each subject has 26 pictures. The subset is constructed by selecting the first 4 images from the first 10 subjects. All the selected images are resized to $66\times 48$.  Fig. \ref{Figs.face40} displays all the samples of the subset. The data matrix $\breve{\mathbf{M}}$ is generated by these samples, and each column of $\breve{\mathbf{M}}$ is generated by vectorizing every image:
$$
\breve{\mathbf{M}}=\mathbf{M}^R \mathtt{i}+\mathbf{M}^{G}\mathtt{j}+\mathbf{M}^{B}\mathtt{k}\in \mathring{\mathbb{H}}^{3168\times 40}_+
$$
where the $\mathbf{M}^R$, $\mathbf{M}^G$ and $\mathbf{M}^B$ are image matrices encoding the values of the Red, Green and Blue channels respectively. 

We test all the methods on $\breve{\mathbf{M}}$. For all the methods, we  generate the initial guess simply by applying successive projection algorithm (SPA) \cite{gillis2013fast} on the stacking real matrix $[\mathbf{M}^R;\mathbf{M}^G;\mathbf{M}^B]$ to get the column set $\mathcal{K}$. The initial of activation matrix $\mathbf{H}$ is simply computed by HNLS method, and the initial of feature matrix $\breve{\mathbf{W}}$ is given by $\breve{\mathbf{M}}(:,\mathcal{K})$ accordingly.

\begin{figure}
\center
\includegraphics[width=0.75\textwidth]{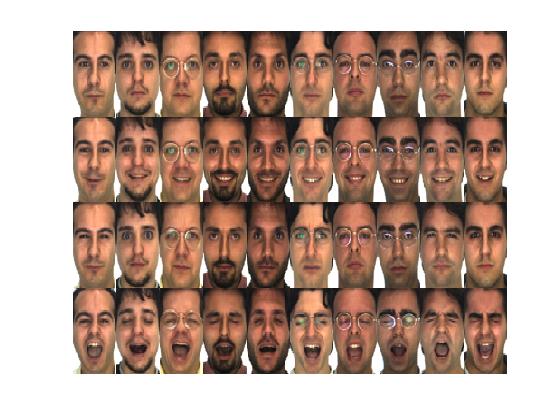}
\caption{The samples of the subset from the AR database}\label{Figs.face40}
\end{figure}

\begin{table}
  \centering 
  \begin{tabular}{|c||c||c|c|c|c||c|}
  \hline
 ~& r&$\Upsilon$& $\Upsilon_1$ &  $\Upsilon_2$ &  $\Upsilon_3$ & Time(s)\\  
 \hline
 \multirow{3}*{QHALS}&5&\underline{83.55} &\textbf{84.37}&\underline{82.82}&\underline{82.82}&\textbf{27.05}\\
  \cline{2-7}&10&\textbf{87.99}&\textbf{88.86}&\textbf{87.23}&\textbf{87.23}&\textbf{53.51}\\
  \cline{2-7}&15&\textbf{90.69}&\textbf{91.42}&\textbf{90.05}&\textbf{90.05}&288.54\\
   \cline{2-7}&20&\textbf{92.61}&\textbf{93.17}&\textbf{92.11}&\textbf{92.11}&523.13\\
   \cline{2-7}&25&\textbf{94.33}&\textbf{94.77}&\textbf{93.95}&\textbf{93.95}&576.60\\
  \hline
 \hline
 \multirow{3}*{Qals-Rhals}&5&83.51&84.32&82.78&82.78&189.51\\
   \cline{2-7}&10&\underline{87.88} &\underline{88.72}&\underline{87.13}&\underline{87.13}&205.12\\
   \cline{2-7}&15&\underline{90.48}&\underline{91.22}&\underline{89.84}&\underline{89.84}&196.00\\
   \cline{2-7}&20&\underline{92.24}&\underline{92.88}&\underline{91.69}&\underline{91.69}&219.65\\
   \cline{2-7}&25&\underline{94.12}&\underline{94.56}&\underline{93.73}&\underline{93.73}&213.14\\
 \hline
 \hline
  \multirow{3}*{Qhals-Rals}&5 &\textbf{83.56}&\underline{84.36}&\textbf{82.83}&\textbf{82.83}&88.01\\
   \cline{2-7}&10&87.36 &88.36&86.47&86.47&211.07\\
    \cline{2-7}&15&89.70&90.49&89.01&89.01&397.74\\
   \cline{2-7}&20&90.13&91.02&89.36&89.36&553.63\\
   \cline{2-7}&25&92.83&93.38&92.34&92.34&768.35\\
 \hline
 \hline
 \multirow{3}*{QALS}&5&82.89 &83.81&82.06&82.06&187.07\\
   \cline{2-7}&10&86.25 &87.06&85.53&55.53&197.94\\
   \cline{2-7}&15&81.15&81.93&80.45&80.45&214.33\\
   \cline{2-7}&20&82.00&83.26&80.89&80.89&188.59\\
   \cline{2-7}&25&74.73&76.41&73.23&73.23&175.48\\
    \hline
 \end{tabular}
 \caption{Numerical results (in percent) for 
 color facial images.}\label{table:RGB_result}
\end{table}

The approximations by the methods are presented in Table  \ref{table:RGB_result}. The best results are highlighted in bold, and the second best are highlighted in underline. From Table \ref{table:RGB_result}, we observe that: 

\begin{itemize}
\item In terms of the relative total approximation, and the approximations of all components, the approximations of all the proposed methods increase as the number of features $r$ increases, except for the QALS. The QHALS performs the best   for most cases. And the Qals-Rhals obtains the second best results. The approximations from the QALS are the least for all the cases, which means that QALS performs the worst.

\item In terms of running time, the QHALS is the fastest method when $r=5, 10$.  andthe QALS is the second fast method. The QHALS and Qhals-Rhals are slower than the other two methods when $r=15,20,25$.   
\end{itemize}

Fig. \ref{Figs.fea10} presents the $r=10$ features obtained by all the methods. The features from the best method QHALS and the second best method Qals-Rhals are similar. Fig. \ref{Figs.RGBapp10} shows the reconstructed images by their respective features. As we can see that, the QHALS outperforms all the other methods, which verify the results shown in Table \ref{table:RGB_result}. It is obvious that the QHALS is the best method among all the methods in terms of both the time cost and approximation results.

\begin{figure} 
\center
\includegraphics[width=0.8\textwidth]{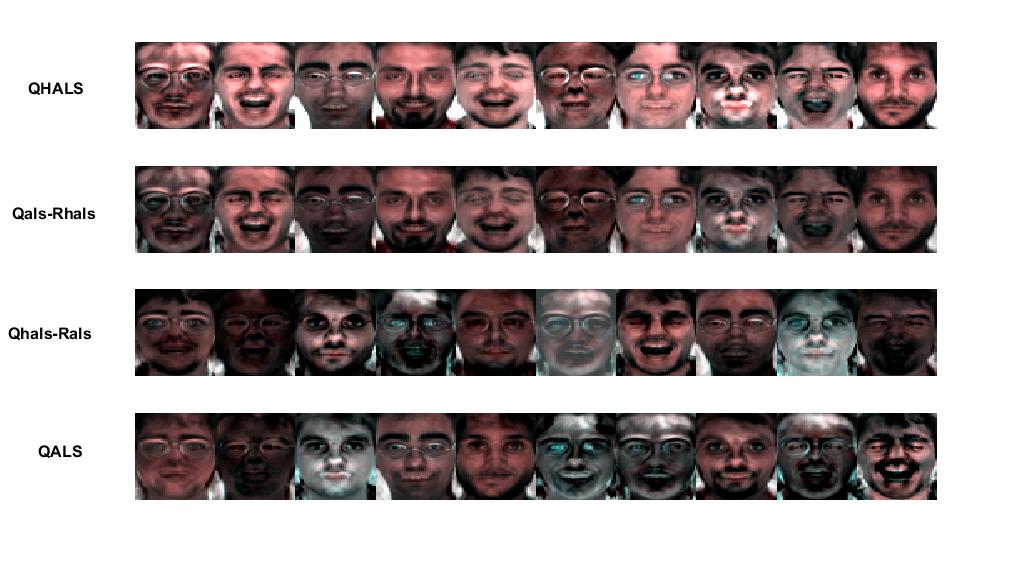}
\caption{The features of AR facial images by the methods when $r=10$.}\label{Figs.fea10}
\end{figure}

\begin{figure} 
\center
\includegraphics[width=\textwidth]{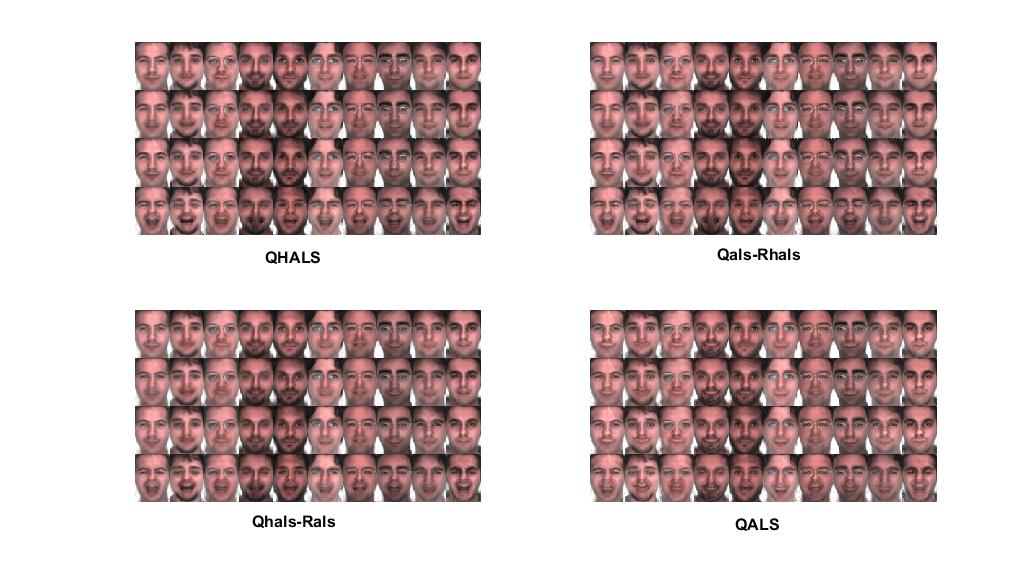}
\caption{The constructed approximation for AR facial images by the methods when $r=10$.}\label{Figs.RGBapp10}
\end{figure}

\section{Conclusion}

We propose a unified QNMF model for processing polarization images and RGB color images, and propose a hierarchical nonnegative least squares method to compute the factor matrices. We also discuss the convergence of the algorithm. To verify the effectiveness of the proposed method, experiments are conducted on polarization image and color facial image database. The experimental results demonstrate the effectiveness of the hierarchical nonnegative least squares method.

\end{document}